\documentclass[a4paper,11pt]{amsart}

\usepackage[utf8]{inputenc}
\usepackage{a4wide}
\usepackage{amsmath,amssymb,amsfonts,amsthm,mathtools}
\usepackage{booktabs}
\usepackage{graphicx}
\usepackage{hyperref}
\usepackage{placeins}
\usepackage{subcaption}

\DeclareMathOperator{\rank}{rank}
\newcommand{\F}{\mathbb{F}}

\newcommand{\ba}{\boldsymbol{a}}
\newcommand{\bh}{\boldsymbol{h}}
\newcommand{\bk}{\boldsymbol{k}}

\newcommand{\bx}{\boldsymbol{x}}
\newcommand{\cD}{\mathcal{D}}
\newcommand{\cC}{\mathcal{C}}
\newcommand{\cP}{\mathcal{P}}
\newcommand{\wal}{\operatorname{wal}}

\newtheorem{definition}{Definition}[section]
\newtheorem{proposition}[definition]{Proposition}
\newtheorem{theorem}[definition]{Theorem}

\title[Digital Nets on Cubature Nodes]{Digital Nets on Cubature Nodes:\\
Inheriting Cubature Accuracy on Low-Dimensional Projections}

\author{Takehito Yoshiki}

\begin{document}
\sloppy

\begin{abstract}
Base-2 digital nets are practical high-dimensional integration rules: the sample
budget $N=2^m$ can be chosen independently of the ambient dimension, and the
generating matrices provide algebraic control of projections and Walsh-dual
weights. They are therefore well suited to problems whose error is governed by
weighted or low-dimensional projection structure. However, when one restricts
attention to a smooth low-dimensional projected component, a low-dimensional
cubature rule with a comparable number of nodes can be substantially more
accurate than the projected digital-net points. This raises the question of
whether low-dimensional cubature accuracy can be inserted into a
high-dimensional digital-net rule without forming the full tensor product.

We answer this question by a simple coordinate embedding: read the leading
$p$ binary digits of each coordinate as an index into $2^p$ equal-weight
cubature nodes, and replace the coordinate by the indexed node. When a
projection forms the full $p$-bit grid, the transformed rule coincides on that
projection with the corresponding product cubature rule; small projected
$t$-values provide sufficient conditions for such full-grid recovery. For
general integrands, the error separates into the corresponding product
cubature error and a residual digital-net term. Experiments with scrambled
Sobol' nets in dimension $50$ illustrate this mechanism and show finite-budget
improvements for the smooth low-order and coordinate-decaying test functions
considered here.
\end{abstract}

\subjclass[2020]{65C05, 65D30, 65D32}
\keywords{quasi-Monte Carlo, digital nets, Sobol' sequences, cubature,
low-dimensional projections, Walsh analysis}

\maketitle

\section{Introduction}\label{sec:introduction}

Base-2 digital nets are a standard tool for high-dimensional quasi-Monte Carlo
(QMC) integration. A digital net places $N=2^m$ points in dimension $d$
without tensor-product growth, and its generating matrices provide algebraic
control of low-dimensional projections. This control is commonly expressed
through $t$-values, equidistribution properties, and Walsh-dual weights
\cite{Dick2008,DickKuoSloan2013,JoeKuo2008}. For high-dimensional problems
whose error is governed by weighted or low-dimensional projection structure,
this makes digital nets both practical and analyzable.

At the same time, if one isolates a smooth low-dimensional projected component
as a low-dimensional integration problem, a cubature rule with a comparable
number of nodes can be substantially more accurate than the projected
digital-net points. The aim of this paper is to keep the digital net as the
high-dimensional rule and add cubature accuracy to its low-dimensional
projections, without forming the full tensor product of cubature nodes.

The construction is simple. Fix a resolution $p$ and an equal-weight
one-dimensional quadrature rule with $2^p$ nodes, whose accuracy is
characterized by polynomial exactness,
\[
  \cC_p=\{c_0,\ldots,c_{2^p-1}\}\subset[0,1].
\]
For each coordinate of a digital-net point, read the leading $p$ binary digits
as an index $a\in\{0,\ldots,2^p-1\}$, and replace that coordinate by $c_a$.
The digital net still supplies the high-dimensional finite-field pattern; the
cubature rule supplies the coordinate alphabet.

The key mechanism is full-grid recovery on projections. If, on a coordinate
subset, the leading $p$-bit indices form the full product grid, then the
transformed rule agrees on that subset with the corresponding product cubature
rule. Thus every function depending only on that subset is evaluated exactly
as by product cubature. Projected $t$-values give simple sufficient conditions
for this recovery. For general integrands, the error separates into the
corresponding product cubature error and a residual digital-net term.

Viewed abstractly, the construction combines a high-dimensional finite pattern
with cubature nodes in each coordinate. This form has precedents in cubature:
orthogonal-array and coding-theoretic constructions also place finite-field or
finite-alphabet patterns on product cubature nodes \cite{Kuperberg2006}. The
finite-pattern structure at fixed $p$ is therefore close to those constructions.
The contribution here is not a new orthogonal-array cubature construction.
Those constructions start from cubature and use the finite pattern to build a
high-dimensional formula, typically with polynomial exactness guarantees. Here
we take an existing digital net as input and use its projection ranks,
$t$-values, and Walsh-dual structure to identify when a cubature alphabet
recovers particular low-dimensional projections and how the remaining residual
is organized.

The main contributions are:
\begin{itemize}
  \item a plug-in cubature-node embedding for existing base-2 digital nets;
  \item an exact error decomposition into product cubature error and a
  residual digital-net term;
  \item a Walsh description of the residual using the ordinary infinite dual
  of the underlying digital net and a retained/tail digit split analogous to
  that used for polynomial lattice rules \cite{DickGodaYoshiki2019};
  \item a full-grid recovery criterion, with projected $t$-values as
  sufficient conditions, showing when low-dimensional projections coincide
  with the corresponding product cubature rules;
  \item numerical experiments with scrambled Sobol' nets in dimension $50$ for
  smooth low-order and coordinate-decaying test functions.
\end{itemize}

\section{Cubature-node embedding}\label{sec:construction}

\subsection{The transformed rule}

Let
\[
  \cP_N=\{\bx_0,\ldots,\bx_{N-1}\}\subset[0,1)^d,
  \qquad N=2^m,
\]
be a digital net in base $2$. We follow the standard generating-matrix
notation for digital nets \cite{JoeKuo2008}.
We use the standard infinite-row convention: the matrices have $m$ input
columns and infinitely many output rows, so that each coordinate has an
infinite binary expansion. Thus $\cP_N$ is still a finite digital net of
$N=2^m$ points, rather than a digital sequence. Let
\[
  C_1,\ldots,C_d\in\F_2^{\mathbb N\times m}
\]
be its generating matrices. For
$n=n_0+2n_1+\cdots+2^{m-1}n_{m-1}$, write
$\vec n=(n_0,\ldots,n_{m-1})^\top$. The $j$th coordinate of the $n$th
point is generated by
\[
  \vec y_{n,j}=C_j\vec n\in\F_2^{\mathbb N},
  \qquad
  x_{n,j}=\sum_{r\ge1} y_{n,j,r}2^{-r}.
\]
We now introduce the coordinate replacement used in this paper. A
one-dimensional cubature rule is used coordinate-wise: the leading $p$ binary
digits of each coordinate select one cubature node.

Fix $p\ge1$ and an indexed equal-weight one-dimensional rule
\[
  Q_p^{(1)}(g)
  =
  2^{-p}\sum_{a=0}^{2^p-1}g(c_a).
\]
The index $a$ is identified with a $p$-bit binary symbol. For $x\in[0,1)$,
define
\[
  I_p(x)=\lfloor2^p x\rfloor,
  \qquad
  T_p(x)=c_{I_p(x)}.
\]
The map $T_p$ is applied coordinate-wise to points of the digital net. For
$\bx=(x_1,\ldots,x_d)$, set
\[
  T_p(\bx)=\bigl(T_p(x_1),\ldots,T_p(x_d)\bigr).
\]
We also write
\[
  \tau_p(x)=2^{-p}I_p(x)
\]
for the usual leading $p$-bit truncation.

\begin{definition}[Digital net on cubature nodes]
The transformed rule is
\[
  Q_{N,p}(f)
  =
  \frac1N\sum_{n=0}^{N-1}f(T_p(\bx_n)).
\]
\end{definition}

Repeated transformed points are allowed. Computationally, the rule only
requires extracting the first $p$ bits of each coordinate and applying the
corresponding one-dimensional cubature-node lookup.

The complete product rule associated with the same one-dimensional nodes is
\[
  Q_p^{\mathrm{prod}}(f)
  =
  2^{-pd}
  \sum_{\ba\in\{0,\ldots,2^p-1\}^d}
  f(c_{a_1},\ldots,c_{a_d}).
\]
This product rule is generally too large to evaluate. The digital net selects
only $N$ indexed combinations from its product alphabet. The next section gives
the exact condition under which these selected combinations reproduce a
low-dimensional product cubature rule.

\section{Projection recovery}\label{sec:projection}

\subsection{Rank criterion}

The recovery statement is most naturally expressed for projected components.
Let $u\subseteq\{1,\ldots,d\}$ be a coordinate subset, and let $f_u$ denote an
integrand component depending only on the coordinates in $u$. This notation is
consistent with the usual ANOVA decomposition $f=\sum_u f_u$
\cite{WangFang2003}, although the
rank criterion below applies to any function depending only on $u$.

The transformation changes the coordinate values, but not the leading
$p$-bit symbol pattern of the digital net. This retained finite pattern is the
mechanism behind cubature accuracy on low-dimensional projections. If a
component $f_u$ depends only on coordinates in $u$ and the leading $p$-bit
symbols on $u$ are uniformly covered, then the transformed digital net does not
merely approximate the product cubature rule on that component; it coincides
with it:
\[
  Q_{N,p}(f_u)=Q_{p,u}^{\mathrm{prod}}(f_u).
\]
Here $Q_{p,u}^{\mathrm{prod}}$ denotes the $|u|$-dimensional tensor product of
the same one-dimensional cubature rule, and $I_u$ denotes integration over the
coordinates in $u$.
Consequently its error on that component is exactly the product-cubature bias
\[
  Q_{N,p}(f_u)-I_u(f_u)
  =
  Q_{p,u}^{\mathrm{prod}}(f_u)-I_u(f_u).
\]
The rank criterion below is the finite-field condition for this equality.

The finite-field object behind this condition is the map from the $m$ input
bits of the digital net to the first $p$ output bits. These digits define a
linear map
\[
  A_p:\F_2^m\longrightarrow(\F_2^p)^d,
\]
whose coordinate projection onto $u$ is denoted by
\[
  A_{p,u}:\F_2^m\longrightarrow(\F_2^p)^{|u|}
\].

\begin{proposition}[Exact rank criterion]\label{prop:rank}
The projected leading-digit pattern is uniform on
$(\F_2^p)^{|u|}$ if and only if
\[
  \rank(A_{p,u})=p|u|.
\]
When this condition holds, the transformed digital net agrees with the
complete product cubature rule on every function depending only on $u$; hence
\[
  Q_{N,p}(f_u)-I_u(f_u)
  =
  Q_{p,u}^{\mathrm{prod}}(f_u)-I_u(f_u).
\]
\end{proposition}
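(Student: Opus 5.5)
The plan is to treat the projected leading-digit pattern as the image of the uniform distribution on $\F_2^m$ under the linear map $A_{p,u}$. As $n$ runs over $\{0,\ldots,2^m-1\}$, the digit vector $\vec n$ runs over all of $\F_2^m$ exactly once. The leading $p$ digits of coordinate $j$ are the first $p$ rows of $C_j\vec n$, so the $u$-projected symbol is $A_{p,u}\vec n$. Since $A_{p,u}$ is linear, every element of its image is attained exactly $|\ker A_{p,u}|=2^{m-\rank(A_{p,u})}$ times, and elements outside the image are never attained. So the pattern is a uniform multiset on the image subspace.

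For the equivalence, uniformity on all of $(\F_2^p)^{|u|}$ means every one of the $2^{p|u|}$ symbols occurs equally often, and in particular at least once. This holds if and only if the image is the whole space, i.e.\ $A_{p,u}$ is surjective, i.e.\ $\rank(A_{p,u})=p|u|$. Conversely, full rank makes every fibre a coset of the kernel of size $2^{m-p|u|}$, which gives exact uniformity. Both directions come from the coset structure of fibres of a linear map, and this requires $m\ge p|u|$, which is automatic from the rank condition.

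For the cubature consequence, take $f_u$ depending only on coordinates in $u$. Then $f_u(T_p(\bx_n))=g(c_{a_{j}}:j\in u)$, where $a_j=I_p(x_{n,j})$ is the integer whose binary digits are the first $p$ entries of $C_j\vec n$. This identification needs $I_p(x)=\lfloor 2^p x\rfloor$ to read exactly those digits. The subtle point here is the infinite-row convention: if the expansion ends in all ones, $x$ could round up. I will note that the standard convention avoids this, or that the digit-wise definition of $I_p$ is what is meant. Grouping the sum over $n$ by the value of $\ba_u=(a_j)_{j\in u}$, each of the $2^{p|u|}$ index tuples appears exactly $2^{m-p|u|}$ times. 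Hence
\[
Q_{N,p}(f_u)=2^{-m}\,2^{m-p|u|}\sum_{\ba_u}f_u\bigl((c_{a_j})_{j\in u}\bigr)=Q_{p,u}^{\mathrm{prod}}(f_u),
\]
and subtracting $I_u(f_u)$ from both sides gives the stated error identity.

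The only real obstacle is the bookkeeping between the digit vector and the index $I_p(x)$, the dyadic-endpoint issue noted above. Everything else is linear algebra over $\F_2$.
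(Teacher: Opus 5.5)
Your proposal is correct and follows the paper's argument. The fibres of the linear map $A_{p,u}$ are cosets of its kernel, so the pattern is uniform on the image; uniformity on all of $(\F_2^p)^{|u|}$ is surjectivity, i.e.\ full rank; and grouping the sum by index tuples yields $Q_{p,u}^{\mathrm{prod}}(f_u)$. Your remark about identifying $\lfloor 2^p x\rfloor$ with the first $p$ digits when an expansion ends in infinitely many ones is a real detail that the paper's short proof skips, and adopting the standard convention, with $I_p$ read digit-wise, settles it.
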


\begin{proof}
A linear map has uniform fibers over its image. It covers every projected
symbol vector exactly when it is surjective, which is equivalent to full rank.
Under this condition each symbol vector in
$(\F_2^p)^{|u|}$ occurs with frequency $2^{-p|u|}$. Grouping the transformed
points by these symbols therefore gives exactly the complete product cubature
average on $u$.
\end{proof}

For an ANOVA decomposition $f=\sum_u f_u$, every component whose projected
matrix has full rank inherits the corresponding product-cubature accuracy.
This is the main mechanism by which low-dimensional cubature accuracy can be
inherited inside a high-dimensional digital net.

\subsection{Projection-specific \texorpdfstring{$t$}{t}-values}

We use the standard definition of the $t$-value for the projected digital net
\cite{JoeKuo2008}. It provides a convenient sufficient certificate for
full-grid recovery. Let $t_u$ be a valid $t$-value of the projected digital net
on $u$. If $p|u|\leq m-t_u$, then every $p$-bit box in that projection has equal
occupancy, and hence the full-grid recovery condition holds.

\begin{proposition}[$t$-value sufficient condition]\label{prop:tvalue}
If
\[
  p|u|\leq m-t_u,
\]
then $\rank(A_{p,u})=p|u|$. Consequently, the transformed rule reproduces the
complete product cubature rule on $u$.
\end{proposition}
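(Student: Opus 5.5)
We argue through the standard linear-algebraic characterization of the $t$-value of a digital net. The projected net on $u$ has generating matrices $C_j$, $j\in u$, and the statement that $t_u$ is a valid $t$-value means the following. For every vector $(d_j)_{j\in u}$ of nonnegative integers with $\sum_{j\in u} d_j = m-t_u$, the $m-t_u$ rows formed by the first $d_j$ rows of each $C_j$, $j\in u$, are linearly independent over $\F_2$. We take this equivalence, between equidistribution in elementary intervals of volume $2^{t_u-m}$ and row independence, from \cite{JoeKuo2008} as the definition in use.

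The matrix of $A_{p,u}$ is exactly the stack of the first $p$ rows of $C_j$ for each $j\in u$, a $p|u|\times m$ matrix. So it suffices to show these $p|u|$ rows are linearly independent.

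\begin{enumerate}
\item Use the hypothesis $p|u|\le m-t_u$ to choose integers $d_j\ge p$ with $\sum_{j\in u}d_j=m-t_u$, for instance by adding the surplus $m-t_u-p|u|\ge 0$ to one coordinate. This requires $u\neq\emptyset$; the empty case is trivial.
\item By the $t$-value property, the rows selected by $(d_j)$ are linearly independent.
\item The $p|u|$ rows of $A_{p,u}$ form a subset of these rows, because $d_j\ge p$. A subset of a linearly independent family is independent, so $\rank(A_{p,u})=p|u|$.
\end{enumerate}

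The second claim then follows immediately from Proposition~\ref{prop:rank}.

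Equivalently, in the combinatorial language, every $p$-bit box $\prod_{j\in u}[a_j2^{-p},(a_j+1)2^{-p})$ has volume $2^{-p|u|}\ge 2^{t_u-m}$. It is therefore a disjoint union of elementary intervals of volume $2^{t_u-m}$, each containing exactly $2^{t_u}$ points. Hence every box has equal occupancy $2^{m-p|u|}$, which is the uniformity in Proposition~\ref{prop:rank}.

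The only delicate point is making sure the cited characterization of $t$-values applies with the infinite-row convention. It does, since only finitely many leading rows are ever involved.
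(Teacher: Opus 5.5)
Your proof is correct, but your main argument takes a different route from the paper's. The paper argues through point counts. Each resolution-$p$ box of the projection is a disjoint union of elementary intervals of volume $2^{t_u-m}$, and the $(t_u,m,|u|)$-net property gives these boxes equal occupancy. The rank conclusion then follows implicitly from the equivalence in Proposition~\ref{prop:rank}. This is exactly your closing ``combinatorial language'' paragraph. Your numbered steps instead use the row-independence characterization of the $t$-value, and they obtain $\rank(A_{p,u})=p|u|$ directly as a statement about the generating matrices, without passing through occupancy.

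Your route shows exactly how much stronger the $t$-value hypothesis is than what recovery needs. Full rank of $A_{p,u}$ is the independence requirement for the single choice $d_j=p$ for all $j\in u$. A valid $t_u\le m-p|u|$ demands independence for every $(d_j)_{j\in u}$ with $\sum_{j\in u}d_j=m-t_u$, and hence for every $(d_j)$ summing to $p|u|$. This explains the paper's remark that the condition is sufficient but not necessary. The paper's route, in turn, needs only the geometric definition of a $(t,m,s)$-net, and it directly yields the occupancy count $2^{m-p|u|}$ that the cubature consequence uses.

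One refinement of your final remark. Both arguments tacitly need more than finiteness of the rows involved: the digit vectors $C_j\vec n$ must be the genuine binary expansions of the coordinates, with no infinite trailing runs of ones. Only then do leading digits, elementary-interval membership and $I_p$ agree. The paper also assumes this implicitly.
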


\begin{proof}
Every projected resolution-$p$ binary box can be partitioned into elementary
intervals of volume $2^{t_u-m}$. The $(t_u,m,|u|)$-net property gives equal
occupancy of these boxes and therefore equal occupancy of the coarser
resolution-$p$ boxes.
\end{proof}

The condition is sufficient but not necessary. The rank criterion can certify
exact recovery even when the available projected $t$-value bound does not. For
components that are not fully recovered, the remaining error is described by
the Walsh residual in the next section.

\section{Error structure}\label{sec:error}

\subsection{Product-cubature bias and residual digital-net error}

Choose an order-preserving extension
\[
  \phi_p:[0,1]\to[0,1],
  \qquad
  \phi_p(a/2^p)=c_a,\quad 0\leq a<2^p,
\]
and put
\[
  F_p(\bx)=f(\phi_p(x_1),\ldots,\phi_p(x_d)).
\]
The interpolation condition gives $T_p=\phi_p\circ\tau_p$ coordinate-wise.
Thus, let
\[
  g_p(\bx)=f(T_p(\bx))=F_p(\tau_p(\bx)).
\]
The identities in this subsection are algebraic and require only the values of
$f$ at the cubature nodes. For the Walsh representations below, we assume that
the Walsh expansion of the chosen extension $F_p$ may be summed termwise at the
relevant grid points. On the resolution-$p$ dyadic box indexed by
$\ba=(a_1,\ldots,a_d)$, the function $g_p$ is constant with value
$f(c_{a_1},\ldots,c_{a_d})$. Since each such box has volume $2^{-pd}$,
\[
  I(g_p)=Q_p^{\mathrm{prod}}(f).
\]
Moreover $Q_{N,p}(f)=Q_{\cP_N}(g_p)$. Therefore
\begin{equation}
  \boxed{
  Q_{N,p}(f)-I(f)
  =
  \underbrace{Q_p^{\mathrm{prod}}(f)-I(f)}_{\text{product-cubature bias}}
  +
  \underbrace{Q_{\cP_N}(g_p)-I(g_p)}_{\text{digital-net residual for }g_p}.
  }
  \label{eq:main-decomposition}
\end{equation}
This identity is the main error decomposition. The first term is a product
cubature error for the original integrand $f$. The second term is not the
ordinary digital-net error for $f$; it is the digital-net error for the
composed function $g_p=f\circ T_p$. The purpose of the remainder of this
section is to compare this residual with the QMC error
$Q_{\cP_N}(f)-I(f)$ of the unmodified digital net. We do so at the Walsh level:
the latter is a dual sum of the Walsh coefficients of $f$, while the former
will become a sum of the coefficients of $F_p$ subject to a retained-dual
condition determined by the same generating matrices.

\subsection{Digital-net dual notation}

The residual term in \eqref{eq:main-decomposition} is a digital-net quadrature
error for $g_p$. We now express it through the ordinary infinite Walsh dual of
the original net. This will provide the representation that is split into
retained and tail digits in the next subsection.

Use the same generating matrices $C_1,\ldots,C_d$ as in
Section~\ref{sec:construction}.
For $k_j\in\mathbb N_0$, let $\vec k_j$ be its binary digit vector, padded by
zeros. The full Walsh dual of the digital net is the infinite set
\cite{Dick2008}
\[
  \cP_N^\perp
  =
  \left\{
  \bk\in\mathbb N_0^d:
  C_1^\top\vec k_1+\cdots+C_d^\top\vec k_d=0
  \right\}.
\]
For any function $g$ whose Walsh expansion may be summed termwise, the
character property gives \cite{Dick2008}
\[
  Q_{\cP_N}(g)-I(g)
  =
  \sum_{\bk\in\cP_N^\perp\setminus\{\boldsymbol{0}\}}\widehat g(\bk).
\]
Applied directly to $g_p$, the same formula gives the residual term in
\eqref{eq:main-decomposition}. Its coefficients are those of a $p$-bit step
function, however, so this form does not yet make the comparison above
transparent.

\subsection{Retained and tail digits}

To compare the residual with the usual Walsh error mechanism of the original
digital net, rewrite the $p$-bit step function $g_p$ through the chosen
extension $F_p$. The actual estimator then becomes
\[
  Q_{N,p}(f)=Q_{\cP_N}(F_p\circ\tau_p),
  \qquad
  Q_p^{\mathrm{prod}}(f)=Q_{\mathrm{grid},p}(F_p),
\]
where $Q_{\mathrm{grid},p}$ is the full dyadic product grid of resolution $p$.

For $\bk\in\mathbb N_0^d$, split each component into retained and tail parts:
\[
  \bk=\bk^{(0)}+2^p\bk^{(1)},
  \qquad
  \bk^{(0)}\in\{0,\ldots,2^p-1\}^d.
\]
Let $C_{j,p}$ denote the first $p$ rows of $C_j$ and define the retained
dual-character set
\[
  \cD_{N,p}^{\perp}
  =
  \left\{
  \bh\in\{0,\ldots,2^p-1\}^d:
  C_{1,p}^\top\vec h_1+\cdots+C_{d,p}^\top\vec h_d=0
  \right\}.
\]
This is only a set of retained characters. The residual Walsh frequencies
below are still infinite because the tail $\bk^{(1)}$ is unrestricted.

\begin{theorem}[Retained/tail Walsh split]\label{thm:retained-tail}
Assume that the Walsh expansion of $F_p$ can be summed termwise at the
resolution-$p$ grid points. Then
\[
  Q_{N,p}(f)
  =
  \sum_{\substack{\bk\in\mathbb N_0^d\\
                  \bk^{(0)}\in\cD_{N,p}^{\perp}}}
  \widehat{F_p}(\bk),
  \qquad
  Q_p^{\mathrm{prod}}(f)
  =
  \sum_{\substack{\bk\in\mathbb N_0^d\\
                  \bk^{(0)}=\boldsymbol{0}}}
  \widehat{F_p}(\bk).
\]
Consequently,
\begin{equation}
  Q_{N,p}(f)-I(f)
  =
  \underbrace{Q_p^{\mathrm{prod}}(f)-I(f)}_{\text{product-cubature bias}}
  +
  \underbrace{
  \sum_{\substack{\bk\in\mathbb N_0^d\\
                  \bk^{(0)}\in
                  \cD_{N,p}^{\perp}\setminus\{\boldsymbol{0}\}}}
  \widehat{F_p}(\bk)
  }_{\text{residual retained-dual contribution}}.
  \label{eq:retained-tail-error}
\end{equation}
\end{theorem}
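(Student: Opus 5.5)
We substitute the Walsh expansion of $F_p$ into both estimators and evaluate the character sums over the net and over the grid. The key fact is that on a resolution-$p$ grid point $\tau_p(\bx)$ a Walsh function only sees the retained digits. For $y=\tau_p(x)$ all binary digits beyond position $p$ vanish, so $\wal_{k}(y)=\wal_{k^{(0)}}(y)=\wal_{k^{(0)}}(x)$, where $k=k^{(0)}+2^pk^{(1)}$. The last equality holds because $\wal_{k^{(0)}}$ depends only on the first $p$ digits of its argument. Hence, coordinate-wise,
\[
  \wal_{\bk}(\tau_p(\bx))=\wal_{\bk^{(0)}}(\bx)
  =(-1)^{\sum_j \vec h_j^\top \vec y_{j,\le p}},
  \qquad \bh=\bk^{(0)},
\]
where $\vec y_{j,\le p}$ denotes the first $p$ digits of the $j$th coordinate.

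By the termwise-summation assumption at the grid points, we may write $F_p(\tau_p(\bx_n))=\sum_{\bk}\widehat{F_p}(\bk)\wal_{\bk^{(0)}}(\bx_n)$. Averaging over $n$ and interchanging the average with the sum gives
\[
  Q_{N,p}(f)=\sum_{\bk}\widehat{F_p}(\bk)\,\frac1N\sum_{n=0}^{N-1}\wal_{\bk^{(0)}}(\bx_n).
\]
The interchange is legitimate because the $n$-sum is finite. For a digital net, $\vec y_{n,j,\le p}=C_{j,p}\vec n$, so the exponent equals $\bigl(\sum_j C_{j,p}^\top\vec h_j\bigr)^\top\vec n$. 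The character sum over $\vec n\in\F_2^m$ is therefore $1$ if $\sum_j C_{j,p}^\top\vec h_j=0$ and $0$ otherwise. This is exactly the condition $\bk^{(0)}\in\cD_{N,p}^\perp$, which gives the first formula.

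The product formula follows the same way, with the net replaced by the full grid $\{2^{-p}\ba\}$, averaged uniformly. The sum $2^{-pd}\sum_{\ba}\wal_{\bh}(2^{-p}\ba)$ is a full character sum over $(\F_2^p)^d$. It equals $1$ if $\bh=\boldsymbol 0$ and $0$ otherwise. Combined with the identity $Q_p^{\mathrm{prod}}(f)=Q_{\mathrm{grid},p}(F_p)$ established above, which uses $\phi_p(a/2^p)=c_a$, this gives the second formula.

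For \eqref{eq:retained-tail-error}, subtract the two formulas. Since $\boldsymbol 0\in\cD_{N,p}^\perp$, the difference is the sum over $\bk^{(0)}\in\cD_{N,p}^\perp\setminus\{\boldsymbol 0\}$. Adding $Q_p^{\mathrm{prod}}(f)-I(f)$ back gives the displayed identity, consistent with \eqref{eq:main-decomposition}.

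The main obstacle is the bookkeeping of the interchange of sums and the well-definedness of the series. The assumption only gives pointwise termwise summation at grid points. The grid and the net both consist of finitely many grid points, so each series is used only at finitely many points. However, regrouping the series by the class of $\bk^{(0)}$ requires the resulting sums to be taken in a consistent order. I would handle this by reading the series in the statement as limits of the same ordered partial sums used in the assumption; the finite averaging then commutes with these limits. Absolute convergence would make all of this immediate.
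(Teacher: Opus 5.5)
Your proposal is correct and follows essentially the same route as the paper: expand $F_p$ at the truncated grid points, use $\wal_{\bk}(\tau_p(\bx))=\wal_{\bk^{(0)}}(\bx)$, evaluate the retained character sums over the net (first $p$ rows $C_{j,p}$) and over the full dyadic grid, and then subtract. Your explicit character-sum computation via $\vec y_{n,j,\le p}=C_{j,p}\vec n$, and your reading of the restricted series as limits of the same ordered partial sums, only make explicit what the paper's proof leaves implicit.
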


\begin{proof}
We separate the two averages.

First consider the digital-net average. Push the truncation onto the point
set: if $\tau_p(\cP_N)$ denotes the image multiset, then
\[
  Q_{N,p}(f)
  = Q_{\cP_N}(F_p\circ\tau_p)
  = Q_{\tau_p(\cP_N)}(F_p).
\]
Thus $F_p$, rather than the step function $g_p=F_p\circ\tau_p$, can be
expanded in Walsh characters. For any Walsh frequency $\bk$,
\[
  \wal_{\bk}(\tau_p(\bx))
  =
  \wal_{\bk^{(0)}}(\bx),
\]
because $\tau_p$ sets the input digits after position $p$ to zero. Thus, in
the binary Walsh pairing, the tail digits $\bk^{(1)}$ of the Walsh index
multiply only zero input digits and do not enter the sample average. Expanding
$F_p$ on the truncated point multiset and then using this identity gives
\[
  Q_{N,p}(f)
  = Q_{\tau_p(\cP_N)}(F_p)
  = \sum_{\bk\in\mathbb N_0^d}\widehat{F_p}(\bk)
    Q_{\tau_p(\cP_N)}(\wal_{\bk})
  =
  \sum_{\bk\in\mathbb N_0^d}\widehat{F_p}(\bk)
  Q_{\cP_N}(\wal_{\bk^{(0)}}).
\]
When the character property is applied to a retained character, its digit
vector has zero entries after position $p$; hence the full dual test reduces
to the first $p$ rows of the generating matrices. It gives, for every retained character
$\bh\in\{0,\ldots,2^p-1\}^d$,
\[
  Q_{\cP_N}(\wal_{\bh})
  =
  \begin{cases}
  1, & \bh\in\cD_{N,p}^{\perp},\\
  0, & \bh\notin\cD_{N,p}^{\perp}.
  \end{cases}
\]
Therefore only terms with $\bk^{(0)}\in\cD_{N,p}^{\perp}$ remain, giving the
first formula.

Second consider the full product grid. Since
$F_p(\ba/2^p)=f(c_{a_1},\ldots,c_{a_d})$, the product cubature value is
\[
  Q_p^{\mathrm{prod}}(f)=Q_{\mathrm{grid},p}(F_p).
\]
On this grid, the Walsh average of $\wal_{\bk}$ is one when
$\bk^{(0)}=\boldsymbol{0}$ and zero otherwise, i.e.
\[
  Q_{\mathrm{grid},p}(\wal_{\bk})
  =
  \begin{cases}
  1, & \bk^{(0)}=\boldsymbol{0},\\
  0, & \bk^{(0)}\ne\boldsymbol{0}.
  \end{cases}
\]
This gives the second formula.
Adding and subtracting this product-grid value yields
\eqref{eq:retained-tail-error}.
\end{proof}

Equation~\eqref{eq:retained-tail-error} is the analogue of the split in
\cite{DickGodaYoshiki2019}: the product term corresponds to frequencies whose
retained part is zero, while the residual consists of frequencies whose
retained part is a nonzero dual character. The tail digits are not restricted.
Thus the second term should not be interpreted as an error over a finite dual
set.

\subsection{Dual weights and coefficient distortion}

We now compare the retained-dual residual in
\eqref{eq:retained-tail-error} with the ordinary Walsh error of the unmodified
digital net. There is no direct term-by-term comparison between the two,
because both the summation range in $\bk$ and the coefficients attached to
those indices have changed. The ordinary digital-net error sums the Walsh
coefficients of $f$ over the full dual $\cP_N^\perp$, whereas the residual is
organized by retained indices and uses the Walsh coefficients of the composed
function $F_p$.

The summation range over $\bk$ has a favorable property: for the digit weights
$\mu_\alpha$ used in Walsh-coefficient bounds for smooth functions, its minimum
weight cannot be lower than the minimum dual weight of the original digital
net. Thus the cubature-node embedding does not introduce new lower-weight
Walsh indices. The possible adverse effect is on the coefficient side: the
residual uses the Walsh coefficients of $F_p$ rather than those of $f$.

We first make the summation-range statement precise.

We recall the digit weights only to fix notation; these are standard in the
digital-net literature \cite{Dick2008}. If
\[
  k_j=\sum_{\ell=1}^{v_j}2^{a_{j,\ell}-1},
  \qquad
  a_{j,1}>\cdots>a_{j,v_j}>0,
\]
then the Dick weight of order $\alpha$ is
\[
  \mu_\alpha(\bk)
  =
  \sum_{j=1}^d
  \sum_{\ell=1}^{\min(\alpha,v_j)} a_{j,\ell}.
\]
The NRT weight is the case $\alpha=1$, while
$\mu_\infty(\bk)=\sum_{j=1}^d\sum_{\ell=1}^{v_j}a_{j,\ell}$ counts all nonzero
digit positions with their positions. The only structural property used in
Proposition~\ref{prop:dual-weight} is monotonicity under adding tail digits.

\begin{proposition}[Inherited dual-weight exclusion]
\label{prop:dual-weight}
Let $\mu$ be a digit weight that is monotone under adding tail digits, such as
the NRT weight, $\mu_\infty$, or Dick weights $\mu_\alpha$. Define
\[
  \rho_\mu(\cP_N)
  =
  \min_{\bk\in\cP_N^\perp\setminus\{\boldsymbol{0}\}}\mu(\bk).
\]
Then every frequency in the residual set of
\eqref{eq:retained-tail-error} satisfies
\[
  \mu(\bk)\ge \rho_\mu(\cP_N).
\]
\end{proposition}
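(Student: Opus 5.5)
The plan is to reduce each residual frequency to a nonzero element of the full dual $\cP_N^\perp$ whose weight is at most $\mu(\bk)$. Let $\bk$ lie in the residual set of \eqref{eq:retained-tail-error}. Then $\bh:=\bk^{(0)}\in\cD_{N,p}^{\perp}\setminus\{\boldsymbol{0}\}$. First I will show that $\bh$ itself lies in $\cP_N^\perp\setminus\{\boldsymbol{0}\}$. Each component $h_j<2^p$, so its padded digit vector $\vec h_j$ vanishes beyond position $p$. Hence
\[
  C_j^\top\vec h_j = C_{j,p}^\top(h_{j,1},\ldots,h_{j,p})^\top ,
\]
because only the first $p$ rows of $C_j$ pair with nonzero entries. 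Summing over $j$, the defining condition of $\cD_{N,p}^{\perp}$ becomes exactly the defining condition of $\cP_N^\perp$. This is the same reduction already used in the proof of Theorem~\ref{thm:retained-tail}. Since $\bh\neq\boldsymbol{0}$, it follows that $\mu(\bh)\ge\rho_\mu(\cP_N)$.

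Second, I will compare $\mu(\bk)$ with $\mu(\bh)$. Componentwise, $k_j=h_j+2^pk_j^{(1)}$. So the binary digit set of $k_j$ is that of $h_j$ together with additional digits at positions $>p$, which are strictly higher than every digit of $h_j$. This is exactly ``adding tail digits'', and the assumed monotonicity gives $\mu(\bk)\ge\mu(\bh)$. Chaining the two steps gives $\mu(\bk)\ge\mu(\bh)\ge\rho_\mu(\cP_N)$.

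The only real obstacle is justifying the monotonicity for the listed weights, since the paper merely asserts it. I need to check it for Dick weights, where the ordering matters. In $\mu_\alpha$ the sum runs over the $\min(\alpha,v_j)$ \emph{largest} positions $a_{j,1}>\cdots$. Adding digits at positions above $p$ places them at the front of the list. The new top-$\alpha$ positions are then obtained from the old ones by inserting larger values and dropping the smallest. Each position in the new top-$\alpha$ list is at least the corresponding entry of the old list, and there are at least as many entries. So the sum does not decrease. For $\mu_\infty$ and the NRT weight ($\alpha=1$, where the leading digit can only increase) the claim is immediate.

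I would also note the degenerate coordinates where $h_j=0$ but $k_j\neq0$. In these, the weight of that coordinate increases from $0$ to a positive value, which is consistent with monotonicity. With these checks the proposition follows.
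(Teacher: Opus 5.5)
Your proposal is correct and follows the paper's proof. Like the paper, you show that $\bk^{(0)}\in\cP_N^\perp\setminus\{\boldsymbol{0}\}$ because its digit vector vanishes beyond position $p$, and then conclude with monotonicity under adding tail digits. Your additional check that the Dick weights $\mu_\alpha$ really are monotone (the $i$th largest position in a superset of digits is at least the $i$th largest in the subset, and the number of summed terms cannot decrease) correctly fills in a step the paper only asserts.
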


\begin{proof}
Let $\bk$ appear in the residual set and write
$\bk=\bk^{(0)}+2^p\bk^{(1)}$. Then
$\bk^{(0)}\in\cD_{N,p}^{\perp}\setminus\{\boldsymbol{0}\}$. The frequency
$\bk^{(0)}$, with zero tail digits, belongs to the full dual
$\cP_N^\perp$, because the full dual condition uses only the first $p$ rows
when the higher digits are zero. Hence
$\mu(\bk^{(0)})\ge\rho_\mu(\cP_N)$. By monotonicity under adding tail digits,
$\mu(\bk)\ge\mu(\bk^{(0)})$.
\end{proof}

We now return to the coefficient side. The residual coefficients are the Walsh
coefficients of the fixed composed function $F_p$, not the original Walsh
coefficients of $f$. This is the main possible source of deterioration relative
to the unmodified digital net.

We do not attempt to give a sharp general bound predicting this coefficient
effect. Such a bound would depend on the cubature nodes, their labeling, the
chosen smooth extension $\phi_p$, the integrand, and the particular retained
indices that survive. The relevant design issue is that the cubature alphabet
should admit an extension that does not create unnecessary oscillation on the
retained symbol grid. In particular, it is preferable for the map
$a/2^p\mapsto c_a$ to be order-preserving and not too far from the identity,
and for the node symmetries to be compatible with low-complexity Walsh
characters. Thus the construction is not a universal improvement theorem over
the original digital net. It trades a summation range with controlled minimum
weight against the coefficient effect induced by the cubature alphabet.

\section{Numerical experiments}\label{sec:experiments}

\subsection{Setup}

We use dimension $d=50$ and scrambled Sobol' points generated by
\texttt{scipy.stats.qmc.Sobol} with $52$ output bits, using the
Joe--Kuo direction numbers \cite{JoeKuo2008} and LMS+shift scrambling
\cite{Matousek1998}. The $52$ output bits are chosen so that the resulting dyadic
coordinates are exactly representable in IEEE~754 binary64 arithmetic, whose
significand has $53$ bits. The sample sizes are
$N=2^m$, $m=8,\ldots,17$, with $32$ independent scramblings. We compare the
original full-precision scrambled Sobol' estimator with the proposed
coordinate embedding at resolutions $p=3$ and $p=4$, using respectively
$8$-node and $16$-node equal-weight one-dimensional rules.
The cubature-node vectors and their lookup order are given in
Appendix~\ref{app:node-vectors} and fixed in the accompanying experiment script.
For replicate $r$ at budget exponent $m$, it generates the Sobol' points with
\texttt{qmc.Sobol(d=50, scramble=True, bits=52, seed=20260611+10000r+m)}.
For the nested LMS+shift scrambling used here \cite{Matousek1998}, the leading-digit
ranks, projected occupancies, and hence the full-grid recovery events are
identical across replicates. On the leading $p$ digits LMS acts coordinate-wise
by invertible lower-triangular binary matrices. Across any projection, these
form an invertible block-diagonal transformation of $\F_2^{p|u|}$; hence
$\rank(A_{p,u})$ is unchanged, while a digital shift merely translates the
$p$-bit box labels. Only the sampled function values and residual errors change.

For each function, method, and budget, we report the median absolute error
over the $32$ scramblings and its 25th--75th percentile range. This is a
finite-budget typical-error summary rather than an RMSE comparison: at a fixed
budget, the absolute-error distribution across scramblings can be skewed, and
a few unusually large errors can dominate a mean or RMSE. The summary therefore
describes a typical replicate and its central variability rather than a
mean-square risk. In particular, the unmodified scrambled Sobol' estimator is unbiased, whereas the
embedded rule has the deterministic product-cubature bias in
\eqref{eq:main-decomposition}.

For the test functions, set $g(x)=\exp(x)-(e-1)$ and consider the pure
interaction functions
\begin{align*}
  h_2(\bx)
  &=
  \binom{d}{2}^{-1}\sum_{i<j}g(x_i)g(x_j),\\
  h_3(\bx)
  &=
  \binom{d}{3}^{-1}\sum_{i<j<k}g(x_i)g(x_j)g(x_k),
\end{align*}
which isolate projection recovery at fixed interaction orders; their subscripts
indicate the interaction order. To test coordinate-decaying structure, we also
use the weighted product family
\[
  f_\alpha(\bx)
  =
  \exp\left(\sum_{j=1}^d j^{-\alpha}x_j\right),
  \qquad \alpha\in\{1,2,3\}.
\]
Increasing $\alpha$ concentrates the weighted product functions on
the leading coordinates; this is the same type of coordinate-importance
structure modeled by product weights in weighted QMC spaces
\cite{DickKuoSloan2013}.

The experiments are designed to test three predictions. Pure pair interactions
should show an abrupt transition once all pair projections recover the product
cubature rule. Pure triple interactions should be harder, because the required
rank conditions involve $3p$ retained bits and many more projections. Weighted
exponentials should benefit most when the coordinate weights decay quickly,
because then leading low-dimensional projections dominate the error.

The one-dimensional rules used to define the coordinate alphabet are symmetric
about $1/2$ and were obtained by solving moment-matching equations numerically
for equal weights. The $p=3$ rule is an $8$-node equal-weight rule, not the
$8$-point Gauss rule; with the node values used here it is exact for monomials
through degree $7$. The $p=4$ rule is exact through degree $11$, to numerical
precision. These exactness facts are only meant to describe the coordinate
alphabet used in the experiments, not to make a separate claim about optimal
one-dimensional quadrature.

\subsection{Projection-recovery transitions}

Figure~\ref{fig:pure} compares the pure second- and third-order functions
$h_2$ and $h_3$. For the pair function $h_2$, the transformed-rule error falls
abruptly to the product-rule floor once all pair projections have full rank. The transition
occurs at different budgets for $p=3$ and $p=4$, reflecting the cost of
covering $2p$ retained bits per pair. The third-order function depends on many
more projected constraints and does not exhibit an equally sharp transition;
the observed changes are correspondingly more modest. Thus the third-order plot
is not simply a failed version of the pair experiment; it is testing the point
at which the available Sobol' projections no longer recover all relevant
product-cubature patterns.

\begin{figure}[htbp]
  \centering
  \begin{subfigure}{0.49\linewidth}
    \includegraphics[width=\linewidth]{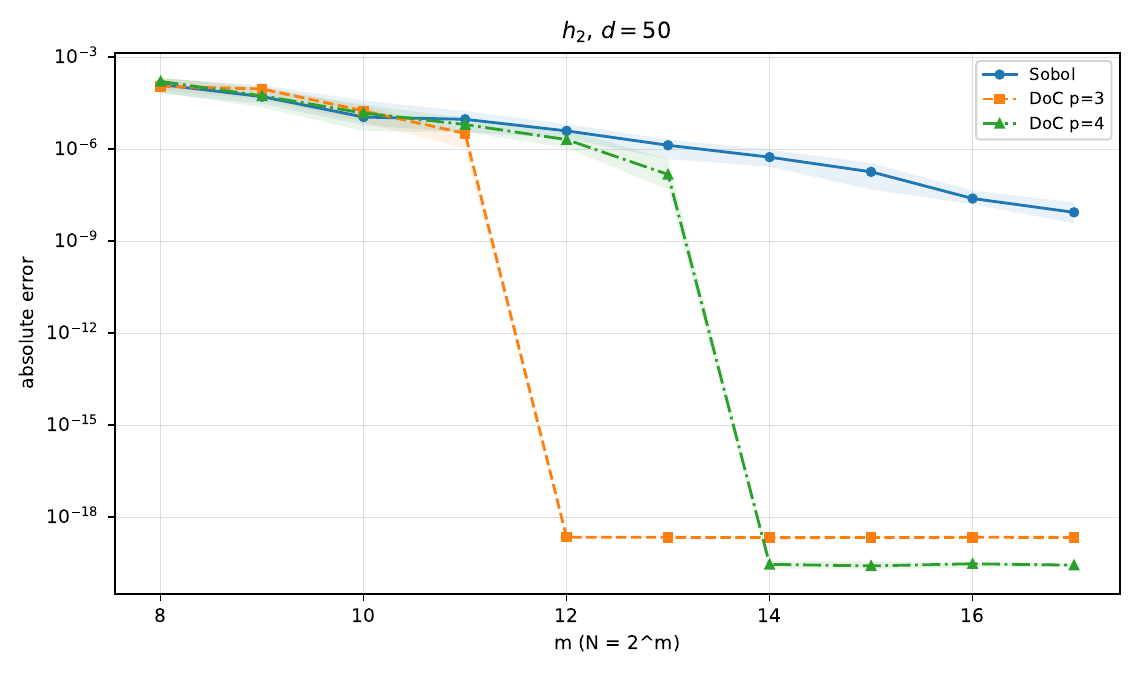}
    \caption{Pure second-order interaction $h_2$.}
  \end{subfigure}
  \begin{subfigure}{0.49\linewidth}
    \includegraphics[width=\linewidth]{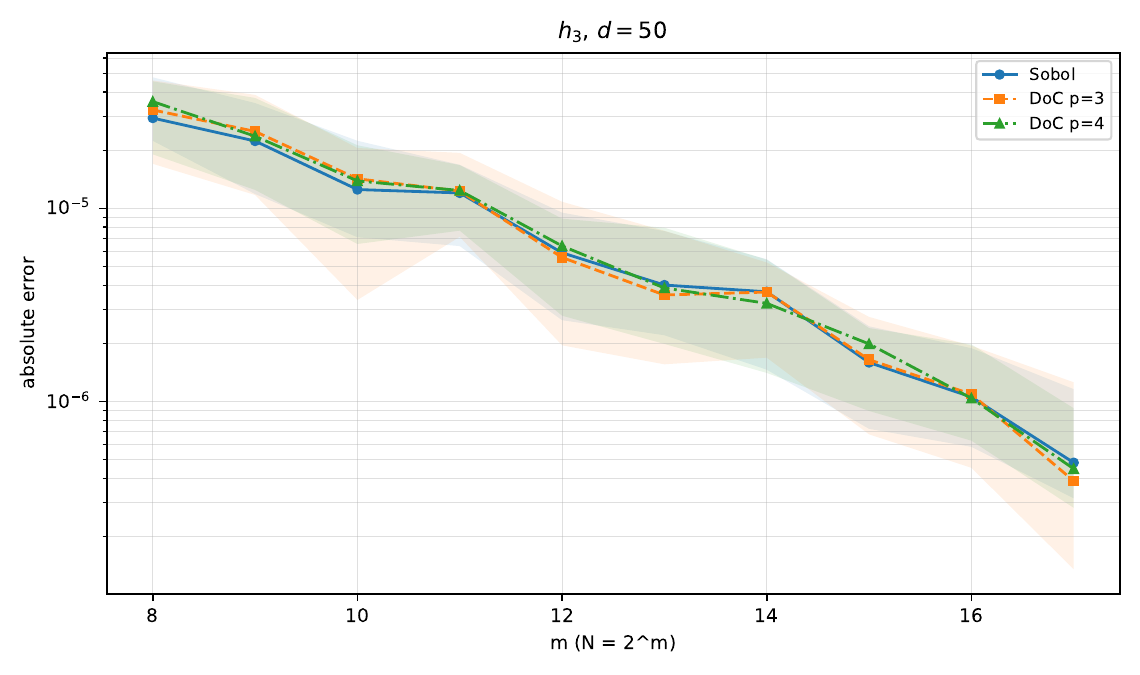}
    \caption{Pure third-order interaction $h_3$.}
  \end{subfigure}
  \caption{Absolute errors over $32$ scramblings. Lines show the median of the
  absolute errors, and shaded bands show their 25th--75th percentile range.}
  \label{fig:pure}
\end{figure}

To make the pair-transition mechanism explicit, Table~\ref{tab:occupancy}
counts the two-coordinate projections satisfying the full-grid recovery
condition. In dimension $d=50$ there are $\binom{50}{2}=1225$ such pair
projections.
The counts are invariant over the $32$ scramblings used in the experiment,
because LMS+shift scrambling preserves the leading-digit ranks and occupancies.
The observed error transitions agree with the complete pair-recovery budgets:
as long as even a small number of pair projections are unrecovered, the leading
pair terms can still dominate the error, while full recovery collapses that
contribution to the product-cubature floor.
The pair test is deliberately diagnostic: because $g$ is centered, $h_2$ is a
pure average of two-coordinate components. Its very small recovered errors
therefore verify the recovery mechanism and should not be interpreted as a
generic performance level for arbitrary integrands.

\begin{table}[htbp]
  \centering
  \caption{Pair projections with exact product-node occupancy.}
  \label{tab:occupancy}
  \begin{tabular}{rrrr}
    \toprule
    $m$ & $N$ & $p=3$ & $p=4$\\
    \midrule
     8 &    256 & 1030/1225 (84.1\%) &  427/1225 (34.9\%)\\
    10 &   1024 & 1205/1225 (98.4\%) & 1051/1225 (85.8\%)\\
    12 &   4096 & 1225/1225 (100.0\%) & 1210/1225 (98.8\%)\\
    14 &  16384 & 1225/1225 (100.0\%) & 1225/1225 (100.0\%)\\
    16 &  65536 & 1225/1225 (100.0\%) & 1225/1225 (100.0\%)\\
    \bottomrule
  \end{tabular}
\end{table}

\FloatBarrier

\subsection{Coordinate decay}

Figure~\ref{fig:weighted} shows the weighted exponential results for
$f_\alpha$, $\alpha=1,2,3$. The
coordinate embedding is most effective when important projections are
concentrated among the leading coordinates, matching the product-weight
intuition that later coordinates should contribute less to the error
\cite{DickKuoSloan2013}. At $m=17$, the best transformed
rule improves over full-precision scrambled Sobol' by factors of approximately
$1.15$, $5.61$, and $65.2$ for $\alpha=1,2,3$, respectively, as summarized
in Table~\ref{tab:improvement}.
For larger $\alpha$, the tail coordinates carry less weight, so exact or nearly
exact recovery of leading low-dimensional projections has a larger visible
effect.

\begin{table}[htbp]
  \centering
  \caption{Median absolute errors for the non-diagnostic test functions:
  best transformed rule versus full-precision Sobol' at $m=17$. Pure-pair
  $h_2$ values are omitted because their post-recovery plateaus are affected by
  floating-point cancellation.}
  \label{tab:improvement}
  \begin{tabular}{lrrr}
    \toprule
    Function & Sobol' & Best transformed rule & Improvement\\
    \midrule
    Pure triple $h_3$
      & $4.83{\times}10^{-7}$ & $3.89{\times}10^{-7}$ & $1.24$\\
    Weighted exp. $f_1$
      & $4.42{\times}10^{-6}$ & $3.86{\times}10^{-6}$ & $1.15$\\
    Weighted exp. $f_2$
      & $2.73{\times}10^{-9}$ & $4.86{\times}10^{-10}$ & $5.61$\\
    Weighted exp. $f_3$
      & $4.57{\times}10^{-11}$ & $7.01{\times}10^{-13}$ & $65.2$\\
    \bottomrule
  \end{tabular}
\end{table}

Figure~\ref{fig:weighted} also illustrates the trade-off in choosing $p$. A
smaller resolution requires fewer retained bits for full-grid recovery. Thus
$p=3$ can recover low-dimensional projections at smaller budgets, as
illustrated by the pure pair experiment in Figure~\ref{fig:pure}. The price is a larger fixed
product-cubature bias: once the residual part has been reduced, the error may
stop decreasing at the floor set by the $8$-node coordinate rule. A larger
resolution such as $p=4$ delays recovery, but gives a lower product-cubature
floor after the important projections have been recovered.

\begin{figure}[htbp]
  \centering
  \begin{subfigure}{0.48\linewidth}
    \includegraphics[width=\linewidth]{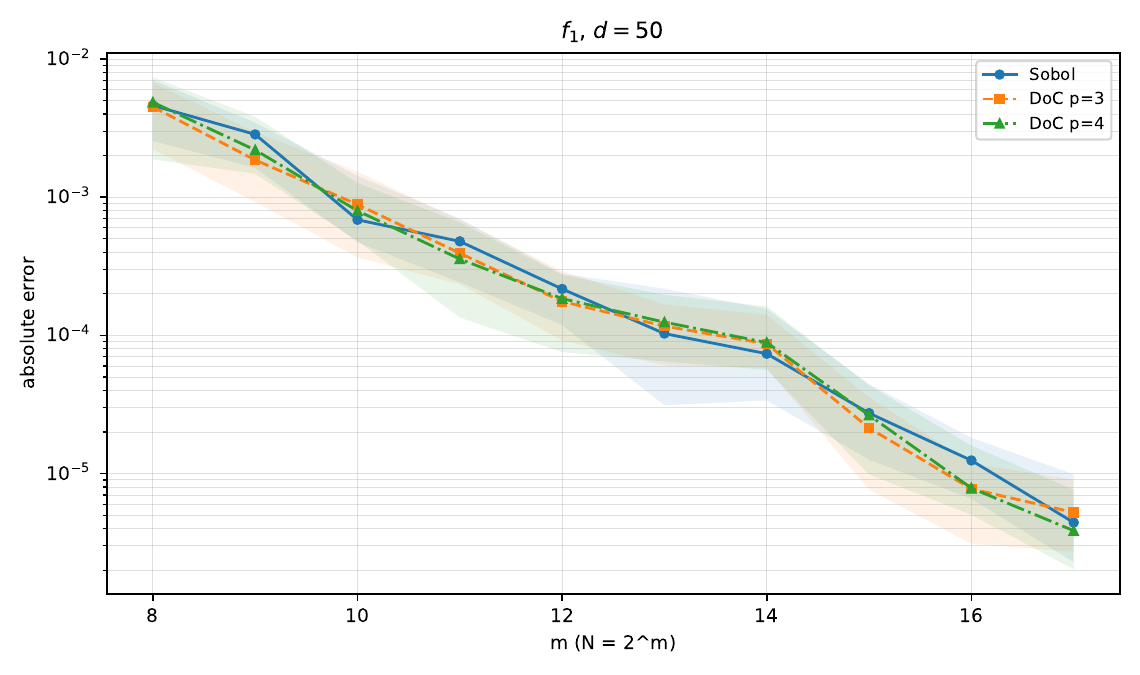}
    \caption{$\alpha=1$.}
  \end{subfigure}
  \hfill
  \begin{subfigure}{0.48\linewidth}
    \includegraphics[width=\linewidth]{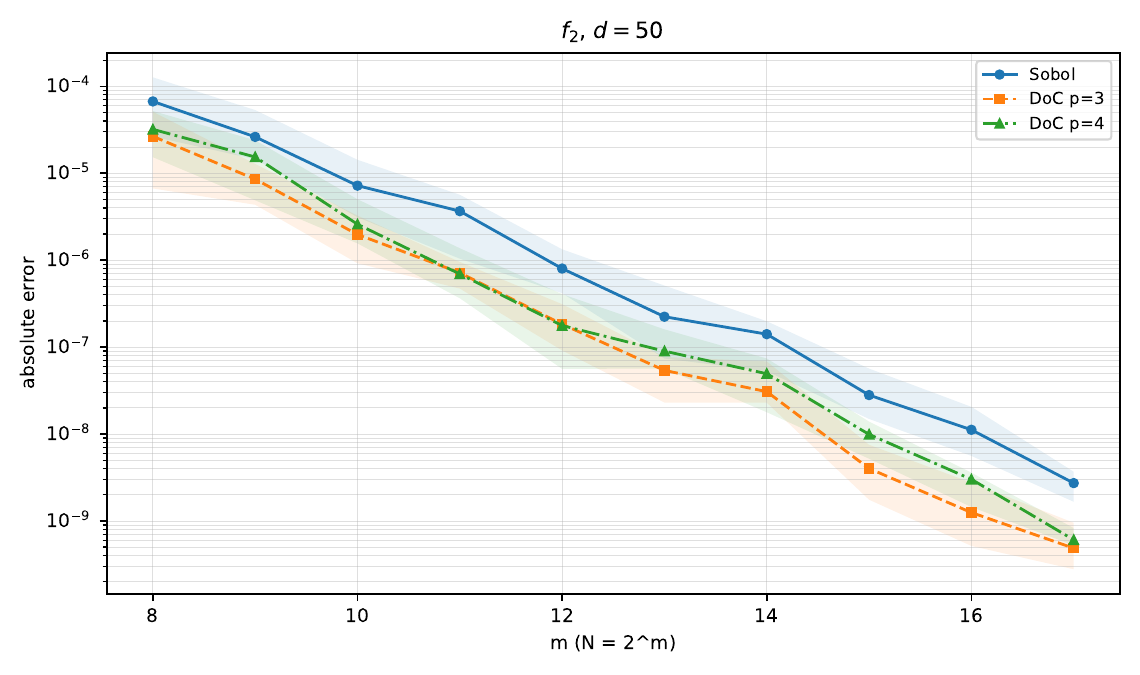}
    \caption{$\alpha=2$.}
  \end{subfigure}
  \par\medskip
  \begin{subfigure}{0.48\linewidth}
    \includegraphics[width=\linewidth]{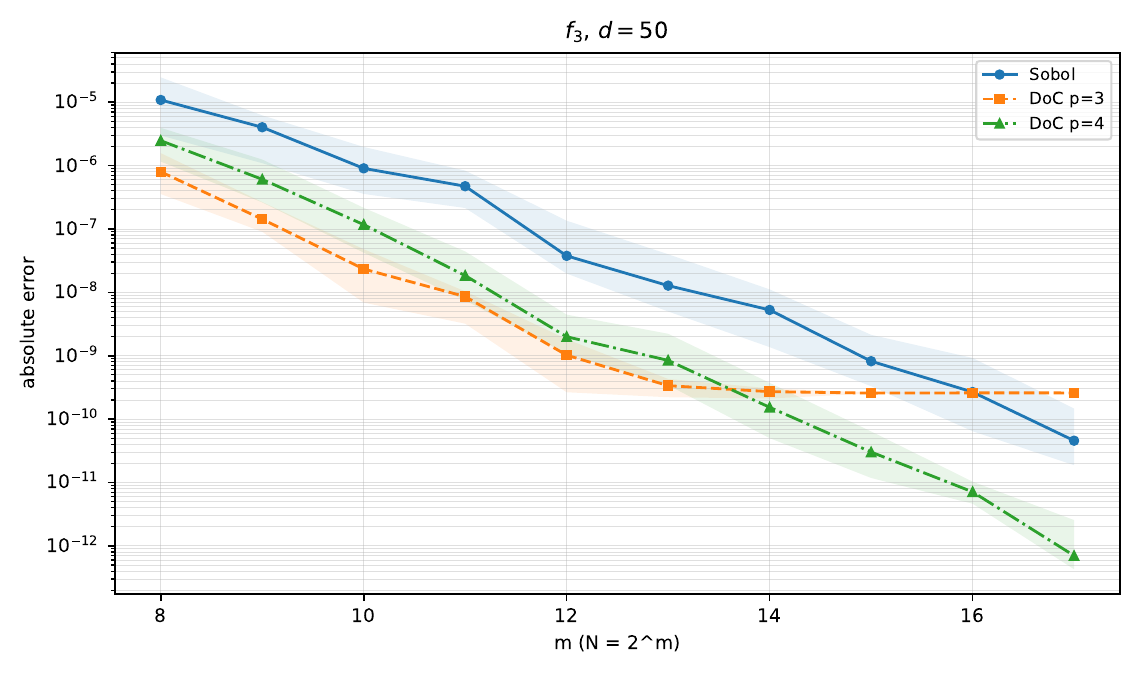}
    \caption{$\alpha=3$.}
  \end{subfigure}
  \caption{Absolute errors for the weighted exponential functions $f_\alpha$
  over $32$ scramblings. Lines show the median of the absolute errors, and
  shaded bands show their 25th--75th percentile range.}
  \label{fig:weighted}
\end{figure}

\FloatBarrier

These experiments are not intended to establish uniform superiority over
scrambled Sobol' rules. Rather, they demonstrate the intended mechanism: when
an integrand is governed by low-dimensional projections that the net recovers
at resolution $p$, replacing the coordinate symbols by cubature nodes can
yield a substantial finite-budget gain. The balance depends on the retained
resolution, the coordinate rule, the underlying net, and the structure of the
integrand; in particular, when the product-cubature bias is large for the
integrand or the chosen resolution, it can offset the gain from projection
recovery.

\section{Discussion and future directions}\label{sec:discussion}

The construction in this paper leaves two coupled design problems fixed: the
retained resolution $p$, and the joint choice of the underlying digital net,
the one-dimensional cubature alphabet, and its labeling. The results suggest
several ways in which these choices could be optimized rather than fixed.

\subsection{Choosing the retained resolution}

The resolution $p$ controls a finite-budget trade-off. Smaller $p$ requires
fewer retained bits for full-grid recovery and can make important projections
recover at smaller budgets. The price is a higher product-cubature floor,
because the coordinate rule has only $2^p$ nodes. Larger $p$ delays recovery,
but gives a lower floor after the relevant projections have been recovered.
This suggests adaptive choices of $p$, or asymptotic constructions with
$p=p(m)$, based on an estimated balance between product-cubature bias and
residual digital-net error.

\subsection{Joint digital-net and cubature design}

The present experiments start from a standard Sobol' net and then replace the
coordinate alphabet. A more systematic construction would choose the digital
structure and the cubature alphabet together. Projection ranks, projected
$t$-values, coordinate-importance weights, scramblings, and node labelings
could all enter such a design criterion.

This direction is also where the relation to cubature designs becomes most
relevant. Coding-theoretic and orthogonal-array cubature constructions place
finite linear patterns on cubature nodes in order to obtain high-dimensional
formulas with polynomial exactness guarantees \cite{Kuperberg2006}. The present
work uses an existing QMC digital net instead, with projection recovery and
residual digital-net error as the main criteria. A natural next step is to seek
arrays that satisfy both digital-net projection criteria and cubature exactness
criteria after the alphabet replacement. Such a joint design should also take
the residual coefficient effect into account. The mapping from retained
$p$-bit labels to cubature nodes should vary regularly, so that it does not
introduce unnecessary oscillation into the transformed integrand.

\appendix
\section{Cubature-node vectors used in the experiments}
\label{app:node-vectors}

For reproducibility, the equal-weight one-dimensional node vectors used in
Section~\ref{sec:experiments} are listed below. The nodes are in increasing
order: label $a\in\{0,\ldots,2^p-1\}$ is assigned to the node $c_a^{(p)}$.
\[
\begin{aligned}
(c_a^{(3)})_{a=0}^{7}
={}&\bigl(
0.05093265814548453,\ 0.20573326961409688,\ 0.29319949903096630,\\
&\phantom{\bigl(}
0.45191153984582166,\ 0.54808846015417834,\ 0.70680050096903370,\\
&\phantom{\bigl(}
0.79426673038590312,\ 0.94906734185451547
\bigr),\\[1mm]
(c_a^{(4)})_{a=0}^{15}
={}&\bigl(
0.02500000000000000,\ 0.10768136111123704,\ 0.14031474924487128,\\
&\phantom{\bigl(}
0.22187628135877777,\ 0.30000000000000000,\ 0.32142317232938330,\\
&\phantom{\bigl(}
0.40993454388816797,\ 0.48000000000000000,\ 0.52000000000000000,\\
&\phantom{\bigl(}
0.59006545611183203,\ 0.67857682767061670,\ 0.70000000000000000,\\
&\phantom{\bigl(}
0.77812371864122223,\ 0.85968525075512872,\ 0.89231863888876296,\\
&\phantom{\bigl(}
0.97500000000000000
\bigr).
\end{aligned}
\]

\end{document}